\documentclass[a4paper,12pt]{amsart}

\input xypic
\usepackage{amssymb}
\xyoption{all}

\textwidth 17cm
\oddsidemargin -0.3cm
\evensidemargin -0.3cm

\newtheorem{thm}{Theorem}[section]
\newtheorem{lemma}[thm]{Lemma}

\newtheorem{prop}[thm]{Proposition}
\theoremstyle{definition}

\newtheorem{question}{Question}
\def\add{\operatorname{add}\nolimits}

\def\End{\operatorname{End}\nolimits}

\def\Hom{\operatorname{Hom}\nolimits}

\author{Jeremy Rickard}
\date\today
\title{Infinitely many algebras derived equivalent to a block}
\begin{document}
\maketitle
\section{Introduction}
Let $k$ be an algebraically closed field of characteristic $p>0$, let
$D$ be a finite $p$-group.

Brou\'e's Abelian Defect Conjecture \cite{Bro} predicts that if $D$ is
abelian and $G$ is a finite group with $D\leq G$, and $B$ is a block
algebra of $kG$ with defect group $D$, then $B$ is derived equivalent
to its Brauer correspondent, a block algebra of $N_G(D)$ which also
has defect group $D$.

Donovan's Conjecture predicts that there are, up to Morita
equivalence, only finitely many different block algebras of finite
groups with defect group $D$.

Both of these conjectures are still open, although progress has been
made on special cases of both. It is natural to ask (and a number of
people have asked) if they are related in the following way: if it
were true that there are only finitely many Morita equivalence classes
of algebras derived equivalent to each block of defect $D$, then
Brou\'e's Conjecture would imply Donovan's Conjecture for abelian $D$.

In some small cases, this is true. If $D$ is cyclic, every block with
defect $D$ is Morita equivalent to a Brauer tree algebra, and it is
known that the only algebras derived equivalent to a Brauer tree
algebra are also Morita equivalent to Brauer tree algebras for trees
with the same number of edges and same multiplicty of the exceptional
vertex, so there are only finitely many possibilities. If $D$ is a
Klein 4-group, then it is also known that there are only a very small
number of Morita classes of algebras derived equivalent to any block
with defect group $D$ (and in fact all of these occur as blocks of
group algebras.

However, for larger $D$ it seems that this is very rarely true. In
fact, we believe that the cases mentioned above are probably the only
cases where this is true.

In this paper, we give a method of showing by a fairly simple
calculation that a given block algebra has infinitely many Morita
equivalence classes of algebras derived equivalent to it, and show
that the method applies to several blocks with small abelian defect
group.

In fact, our method produces a sequence of algebras with unbounded
Cartan invariants (which is how we can detect that there are
infinitely many Morita equivalence classes. A weaker version of
Donovan's Conjecture states that the blocks with a given defect group
$D$ have bounded Cartan invariants, and this means that even this
weaker version wouldn't follow in a straightforward way from Brou\'e's
Conjecture.

\section{The main theorem}

We fix a field $k$, and a finite-dimensional algebra $A$ over $k$.

By a ``module'' for such an algebra, we shall always mean a right
module unless specified otherwise.

If $X$ is an object of an additive category, then by $\add(X)$ we mean
the category of direct summands of finite direct sums of copies of $X$. 

Let $A$ be a $k$-algebra. By $\mod(A)$ we denote the category of
finitely generated $A$-modules, and by $P_A$ the category of finitely
generated $A$-modules. So, if $\!_AA$ is the regular $A$-module,
considered as an object of $\mod(A)$, then $P_A$ is just
$\add(\!_AA)$. We also identify $\mod(A)$ in the usual way with the
full subcategory of $D^b(A)$ consisting of complexes concentrated in
degree zero.
 
By $K^b(P_A)$ we denote the homotopy category of bounded complexes
over $P_A$, and by $D^b(A)$ the bounded derived category of
$A$-modules. We regard $K^b(P_A)$ as a full subcategory of $D^b(A)$ in
the usual way.

Let $\{S_i:i\in I\}$ be a set of representatives of the isomorphism
classes of simple $A$-modules, and for each $i\in I$ let $P_i$ be a
projective cover of $S_i$.

Recall \cite{Ri1} that a {\bf tilting complex} for $A$ is an object $T$ of
$K^b(P_A)$ such that 
$$
\Hom_{K^b(P_A)}\left(T,T[n]\right)=0
$$
if $n\neq0$ and such that $\add(T)$ generates $K^b(P_A)$ as a
triangulated category, and that a $k$-algebra $B$ is derived
equivalent to $A$ if and only if $B\cong\End_{K^b(P_A)}(T)$ for $T$
some tilting complex for $A$.

The following construction of a tilting complex for a symmetric
$k$-algebra was described in \cite{Ri2} and \cite{Oku}, but since
neither of these has been published (at least, not in English), we
shall give details here.

First, recall \cite[Corollary 3.2]{Ri4} the following duality for 
finite-dimensional symmetric $k$-algebras.

\begin{lemma}\label{dual} 
  Let $A$ be a finite-dimensional symmetric $k$-algebra, and $X$, $Y$
  objects of $K^b(P_A)$. Then $\Hom_{K^b(P_A)}(X,Y)$ is dual to
  $\Hom_{K^b(P_A)}(X,Y)$, and hence (applying this with $Y[n]$ in
  place of $Y$), $\Hom_{K^b(P_A)}(X,Y[n])$ is dual to
  $\Hom_{K^b(P_A)}(Y,X[-n])$.
\end{lemma}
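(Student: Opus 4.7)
The lemma as stated contains an obvious typo in the first clause: the intended assertion must be that $\Hom_{K^b(P_A)}(X,Y)$ is dual to $\Hom_{K^b(P_A)}(Y,X)$, since otherwise the parenthetical deduction makes no sense. With that reading, my plan is to reduce the statement to the well-known bimodule isomorphism $A\cong A^*$ defining a symmetric algebra, verify the duality term-wise for finitely generated projectives, and then extend it to bounded complexes via the total Hom complex.

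First, for finitely generated projective right $A$-modules $P,Q$, I would construct a natural isomorphism $\Hom_A(P,Q)^*\cong\Hom_A(Q,P)$. The symmetric algebra hypothesis supplies an $(A,A)$-bimodule isomorphism $A\cong A^*=\Hom_k(A,k)$. Applying $\Hom_A(P,-)$ yields $\Hom_A(P,A)\cong\Hom_A(P,A^*)\cong\Hom_k(P\otimes_A A,k)=P^*$ as left $A$-modules. Hence $\Hom_A(P,Q)\cong Q\otimes_A\Hom_A(P,A)\cong Q\otimes_A P^*$, where the left $A$-module structure on $P^*$ is via $(a\cdot f)(x)=f(xa)$. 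Now apply $k$-duality together with the standard adjunction $\Hom_k(Q\otimes_A P^*,k)\cong\Hom_{A\text{-mod}}(P^*,Q^*)$, and finally the contravariant equivalence between finite-dimensional left and right $A$-modules given by $k$-duality, to obtain $\Hom_A(P,Q)^*\cong\Hom_{A\text{-mod}}(P^*,Q^*)\cong\Hom_A(Q,P)$. Since every step is natural in $P$ and $Q$, the resulting isomorphism is bifunctorial.

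Next, I would globalise to bounded complexes using the total Hom complex $\Hom^\bullet(X,Y)$, whose degree-$n$ part is $\bigoplus_i\Hom_A(X^i,Y^{i+n})$ (the sum is finite because $X,Y$ are bounded) and whose differential is the usual one. Term-wise application of the isomorphism from the previous paragraph gives an isomorphism of graded $k$-vector spaces $\Hom^\bullet(X,Y)^*\cong\Hom^\bullet(Y,X)$ after the reindexing $n\leftrightarrow -n$ forced by $k$-duality of complexes. A short check, following the sign conventions for differentials on total Hom complexes and on $k$-duals, shows that the differentials agree up to an overall sign on each degree, hence this is an isomorphism of complexes. Passing to cohomology and using that $H^0$ commutes with $k$-duality on bounded complexes of finite-dimensional spaces (so $H^0(\Hom^\bullet(X,Y)^*)\cong H^0(\Hom^\bullet(X,Y))^*$), and that $H^0(\Hom^\bullet(X,Y))=\Hom_{K^b(P_A)}(X,Y)$, yields the first assertion. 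The second assertion is then immediate by substituting $Y[n]$ for $Y$ and using $\Hom_{K^b(P_A)}(Y[n],X)=\Hom_{K^b(P_A)}(Y,X[-n])$.

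The only real obstacle is bookkeeping: identifying the correct left/right module structures at each step so that the chain of canonical isomorphisms is literally an isomorphism (and not merely a non-canonical one), and tracking the signs on differentials of the total Hom complex so that the term-wise isomorphism is genuinely a chain map. Neither of these is conceptually hard, and once the module-level duality and the compatibility with the differential are in place, the lemma drops out.
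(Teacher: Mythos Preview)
Your proof is correct, and the route you take---reducing to the module-level duality $\Hom_A(P,Q)^*\cong\Hom_A(Q,P)$ via the bimodule isomorphism $A\cong A^*$, then globalising through the total Hom complex and passing to $H^0$---is the standard one. Your identification of the typo is also right.

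There is no comparison to make, however: the paper does not give its own proof of this lemma but simply cites it as \cite[Corollary~3.2]{Ri4}. For what it is worth, the argument in that reference proceeds along the same lines as yours, so your write-up is entirely in keeping with what the author has in mind. One minor stylistic remark: the pairing can be described more concretely by fixing a symmetrising form $\lambda:A\to k$ and sending $(f,g)\in\Hom_A(P,Q)\times\Hom_A(Q,P)$ to the ``trace'' of $gf$ computed via $\lambda$; this makes the naturality and the sign checks on the total Hom complex slightly more transparent, but it is equivalent to what you wrote.
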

 
Let $I_0$ be a subset of the set $I$ indexing the simple $A$-modules,
and let $i\in I$.

If $i\in I_0$, let $T_i$ be the object 
$$
\dots\rightarrow0\rightarrow P_i\rightarrow 0\rightarrow0\rightarrow\dots
$$
of $K^b(P_A)$, where $P_i$ is in degree 1.

If $i\in I\setminus I_0$, then let $T_i$ be the object
$$
\dots0\rightarrow R_i\rightarrow P_i\rightarrow 0
$$
of $K^b(P_A)$, where $P_i$ is in degree zero, and $R_i\rightarrow
P_i\rightarrow M_i\rightarrow 0$ is the minimal projective
presentation of the largest quotient $M_i$ of $P_i$ with no
composition factors isomorphic to elements of $\{S_j:j\in I_0\}$ (or
equivalently, $R_i$ is the projective cover of the largest submodule
of $P_i$ whose simple quotients are all isomorphic to elements of
$\{S_j:j\in I_0\}$). Thus, in particular, $R_i$ is a direct sum of
indecomposable projective modules from $\{P_j:j\in I_0\}$ and every
map $P_j\to P_i$, with $j\in I_0$, factors through $R_i\to P_i$.

\begin{prop}
If $A$ is a finite-dimensional symmetric $k$-algebra, then 
$$
T=\bigoplus_{i\in I}T_i
$$
is a tilting complex for $A$.
\end{prop}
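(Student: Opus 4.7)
The plan is to verify the two defining properties of a tilting complex for $T=\bigoplus_{i\in I}T_i$: self-orthogonality $\Hom_{K^b(P_A)}(T,T[n])=0$ for $n\neq 0$, and triangulated generation of $K^b(P_A)$ by $\add(T)$. The generation statement is the easier half: since the $P_j$ already generate $K^b(P_A)$ as a triangulated category, it suffices to show each $P_j$ lies in the triangulated subcategory $\langle\add(T)\rangle$. For $j\in I_0$ this is immediate, since $P_j=T_j[1]$. For $j\in I\setminus I_0$, the two-term complex $T_j$ is by construction the mapping cone of $d\colon R_j\to P_j$, yielding a distinguished triangle $R_j\to P_j\to T_j\to R_j[1]$. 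Since $R_j$ is a direct sum of copies of $P_\ell$ for $\ell\in I_0$ --- which already lie in $\langle\add(T)\rangle$ by the previous case --- and since $T_j\in\add(T)$, we conclude $P_j\in\langle\add(T)\rangle$.

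For the Hom-vanishing I will split into four cases according to whether $i,j\in I_0$. Case A ($i,j\in I_0$): both $T_i$ and $T_j$ are shifts of projectives concentrated in one degree, and $\Hom_{K^b(P_A)}(P_i,P_j[n])$ vanishes for $n\neq 0$ by the usual orthogonality of projectives. Case B ($i\in I_0$, $j\notin I_0$): apply $\Hom_{K^b(P_A)}(T_i,-)$ to the distinguished triangle for $T_j$ and read off the long exact sequence. Since $T_i=P_i[-1]$ and $R_j,P_j$ are projectives in degree zero, the groups $\Hom(T_i,R_j[n])$ and $\Hom(T_i,P_j[n])$ are each concentrated in the single shift $n=-1$, where they equal $\Hom_A(P_i,R_j)$ and $\Hom_A(P_i,P_j)$; the connecting map between them is post-composition with $d$, which is surjective by the factorization property stated after the construction. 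The long exact sequence then pins down $\Hom(T_i,T_j[n])$: outside $n\in\{-2,-1,0\}$ it vanishes by degree reasons; at $n=-1$ it is the cokernel of a surjection and hence zero; at $n=-2$ it is the kernel of the same map. Case C ($i\notin I_0$, $j\in I_0$) follows from Case B via Lemma~\ref{dual}, which identifies $\Hom(T_i,T_j[n])$ with the $k$-linear dual of $\Hom(T_j,T_i[-n])$. Case D ($i,j\notin I_0$) is treated analogously, by applying the long exact sequence once for each of the two triangles and again invoking the factorization property.

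The main obstacle will be disposing of the ``boundary'' shift $n=-2$ in Case B (and, by duality, $n=2$ in Case C): one is left with a residual kernel of the form $\Hom_A(P_i,\ker d)=\Hom_A(P_i,\Omega^2 M_j)$ that is not forced to vanish by pure degree arguments. Killing this term requires combining the factorization property with the symmetric-algebra duality of Lemma~\ref{dual}: surjectivity of post-composition with $d$ on $\Hom_A(P_\ell,R_j)\to\Hom_A(P_\ell,P_j)$ for $\ell\in I_0$ translates, via the symmetric pairing, into injectivity of a dual pre-composition map on the opposite side, and these two statements together exhaust the long exact sequence and force the residual Hom to vanish. This interplay between the factorization property and Lemma~\ref{dual} is the crucial place where the symmetric hypothesis on $A$ enters non-trivially.
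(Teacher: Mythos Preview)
Your generation argument and the overall four-case split match the paper's. The issue is a shift error in Case~B that manufactures a phantom obstacle. In the paper's convention (stated explicitly: $P_i[1]=T_i$ for $i\in I_0$), the stalk $T_i$ sits in the \emph{same} degree as $R_j$ does inside $T_j$; your $T_i=P_i[-1]$ places it two degrees off, so the $T$ you are analysing is not the one in the Proposition (and indeed is not a tilting complex). With the correct shift, your long exact sequence for the triangle $R_j\to P_j\to T_j\to R_j[1]$ gives $\Hom(T_i,R_j[n])$ and $\Hom(T_i,P_j[n])$ concentrated at $n=1$ (not $n=-1$), so the surjectivity of $d_*$ kills $\Hom(T_i,T_j[1])$ and the residual kernel $\ker d_*$ lands at $n=0$, where it is \emph{allowed} to be nonzero --- it is precisely $\Hom_{K^b(P_A)}(T_i,T_j)$. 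There is no boundary term at $n=\pm 2$ to dispose of, and your final paragraph (the extra duality gymnastics to kill it) is unnecessary; the symmetric hypothesis enters only through Lemma~\ref{dual} in handling Case~C and the $n=-1$ half of Case~D, exactly as you already say.

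Once the shift is fixed, your approach via the long exact sequence is essentially a repackaging of the paper's argument, which works directly with chain maps: for $n=1$ the paper observes that a chain map $T_i\to T_j[1]$ is a single map $P_i\to P_j$ (Case~B) or $R_i\to P_j$ (Case~D), and the factorisation property supplies the null-homotopy through $R_j\to P_j$; duality then handles the negative shifts. Your Case~D sketch (``apply the long exact sequence once for each triangle'') will work but is vaguer than needed; the direct chain-map version is a one-line check that any $R_i\to P_j$ factors through $d$ because $R_i\in\add(Q_0)$.
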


\begin{proof}
  Clearly $\Hom_{K^b(P_A)}(T_i,T_j[n])=0$ if $i,j\in I_0$ and
  $n\neq0$, since then there are no chain maps $T_i\to T_j[n]$, as
  $T_i$ is concentrated in degree 1, but $T_j$ is concentrated in
  degree $n+1$.

  For similar reasons, there are no chain maps $T_i\to T_j[n]$ if
  $i\in I_0$ and $j\in I\setminus I_0$ except possibly if $n=0$ or
  $n=1$. But then, since every map $P_i\to P_j$ factors through
  $R_j\to P_j$, every chain map $T_i\to T_j[1]$ is
  null-homotopic. Hence $\Hom_{K^b(P_A)}(T_i,T_j[n])=0$ for $n\neq0$,
  and by duality (Lemma~\ref{dual}), $\Hom_{K^b(P_A)}(T_j,T_i[n])=0$ for
  $n\neq0$

  Similarly, if $i,j\in I\setminus I_0$, there are no chain maps
  $T_i\to T_j[n]$ except possibly for $n\in\{-1,0,1\}$, since $T_i$
  and $T_j$ are concentrated in degrees 1 and 0. But since every map
  $R_i\to P_j$ factors through $R_j\to P_j$, every chain map $T_i\to
  T_j$ is null-homotopic, so $\Hom_{K^b(P_A)}(T_i,T_j[1])=0$ and, by
  duality, $\Hom_{K^b(P_A)}(T_j,T_i[-1])=0$.

  This shows that $\Hom_{K^b(P_A)}(T,T[n])=0$ for $n\neq0$.

  Finally, to show that $\add(T)$ generates $P_A$ as a triangulated
  category, it suffices to note that if $i\in I_0$ then $P_i[1]=T_i$
  is in $\add(T)$, and if $i\in I\setminus I_0$ then the triangle
$$R_i\to P_i\to T_i\to R_i[1]$$
shows that $P_i$ is in the triangulated category generated by
$\add(T)$, since $R_i[1]$ and $T_i$ are in $\add(T)$.
\end{proof}

Let $B=\End_{K^b(P_A)}(T)$, so that there is an equivalence of derived
categories between $F:D^b(A)\to D^b(B)$ sending the objects $T_i$ to
the indecomposable projective $B$-modules. In this way the isomorphism
classes of indecomposable projective $B$-modules are naturally indexed
by $I$, with the projective indexed by $i\in I_0$ being $F(P_i)[1]$.

Since the class of finite-dimensional symmetric $k$-algebras is closed
under derived equivalence~\cite[Corollary 5.3]{Ri3} we can iterate
this construction, using the same subset $I_0\subset I$, obtaining
after $t$ iterations a tilting complex 
$$
T^{(t)}=\bigoplus_{i\in I}T^{(t)}_i
$$ 
for $A$, whose
indecomposable summands are $T^{(t)}_i=P_i[t]$ for $i\in I_0$ and
$$
T^{(t)}_i=\dots\to0\to R_i^{(t-1)}\to R_i^{(t-2)}\to\dots\to
R_i^{(0)}\to P_i\to0\to\dots
$$ 
(with $P_i$ in degree zero) for $i\in I\setminus I_0$, where
$R_i^{(0)}\to P_i$ is the map $R_i\to P_i$ defined above and for
$k>0$, $R_i^{(k)}$ is the projective cover of the largest submodule of
the kernel of the next differential whose simple quotients are all
isomorphic to elements of $\{S_i:i\in I_0\}$.

Thus, if we set
$$
B^{(t)}=\End_{K^b(P_A)}(T^{(t)}),
$$
we obtain a sequence $B=B^{(0)},B^{(1)},\dots$ of algebras that are
derived equivalent to $A$. For $i\in I$, let $P^{(t)}_i$ be the
indecomposable projective $B^{(t)}$-module corresponding to
$T^{(t)}_i$ under the derived equivalence between $B^{(t)}$ and $A$.

We shall give a technique that allows us to prove by a simple
calculation that the sequence $B^{(0)},B^{(1)},\dots$ of algebras very
often contains infinitely many algebras from different Morita
equivalence classes, even when $A$ is a block of a finite group
algebra, and therefore, in such cases, if Donovan's Conjecture is
true, almost all the algebras in the sequence are not Morita
equivalent to blocks of finite group algebras.
  
Let us start by giving another interpretation of this construction.

Let 
$$
Q_0=\bigoplus_{i\in I_0}P_i,
$$
and $E=\End_A(Q_0)$. Then, considering $Q_0$ as an $E$-$A$-bimodule,
the functor 
$$
H=\Hom_A\left(Q_0,-\right):\mod(A)\to\mod(E)
$$ 
restricts to an equivalence of categories
$$
\add(Q_0)\to P_E.
$$

Applying this functor to the complex $T^{(t)}_i$ for
$i\in I\setminus I_0$, we obtain a complex
$$\dots\to0\to H\left(R_i^{(t-1)}\right)\to 
H\left(R_i^{(t-2)}\right)\to\dots\to 
H\left(R_i^{(0)}\right)\to 
H\left(P_i\right)\to0\to\dots
$$
where, since $R^{(k)}_i$ is in $\add(Q_0)$ for each $k$,
$H\left(R^{(k)}\right)$ is a projective $E$-module, and the
construction of the differentials in $T^{(t)}_i$ translates into the fact
that when we apply $H$ we get a complex that, being acyclic except at
$H\left(R^{(t-1)}_i\right)$, is the truncation of a minimal projective
$E$-module resolution of $H\left(P_i\right)$.

Now, consider the Cartan invariants of $B^{(t)}$,
$$c^{(t)}_{ij}=\dim_k\Hom_{B^{(t)}}\left(P^{(t)}_i,P^{(t)}_j\right).$$

Then translating to the derived category of $A$,
$$c^{(t)}_{ij}=\dim_k\Hom_{K^b(P_A)}\left(T^{(t)}_i,T^{(t)}_j\right).$$

If we take $i\in I_0$ but $j\in I\setminus I_0$, so that $T^{(t)}_i=P_i[t]$ and 
$$T^{(t)}_j=\dots\to0\to R_j^{(t-1)}\to R_j^{(t-2)}\to\dots\to
R_j^{(0)}\to P_j\to0\to\dots,
$$ 
then there is a short exact sequence of complexes
$$
0\to T^{(t)}_j\to T^{(t+1)}_j\to R_j^{(t)}[t+1]\to 0, 
$$
and since $\Hom_{K^b(P_A)}\left(T^{(t)}_i,T^{(t)}_j[n]\right)=0$ for
$n\neq0$, $\Hom_{K^b(P_A)}\left(T^{(t)}_i,T^{(t+1)}_j[n]\right)=0$ for
$n\neq-1$ (since $T^{(t+1)}_i=T^{(t)}_i[1]$) and
$\Hom_{K^b(P_A)}\left(T^{(t)}_i,R_j^{(t)}[n]\right)=0$ for $n\neq
t$, the long exact sequence obtained by applying the cohomological
functor $\Hom_{K^b(P_A)}\left(T^{(t)}_i,-\right)$ to this sequence
yields a short exact sequence
$$
0\to\Hom_{K^b(P_A)}\left(T^{(t+1)}_i,T^{(t+1)}_j\right)
\to\Hom_{K^b(P_A)}\left(T^{(t)}_i,R_j^{(t)}[t]\right)
\to\Hom_{K^b(P_A)}\left(T^{(t)}_i,T_j^{(t)}\right)
\to0
$$
and hence
$$
c^{(t)}_{ij}+c^{(t+1)}_{ij}=\dim_k\Hom_A\left(P_i,R_j^{(t)}\right).
$$

Therefore, if 
$$
\left\{\dim_k\Hom_A\left(P_i,R_j^{(t)}\right): 0\leq t<\infty\right\}
$$
is unbounded, then the set of Cartan invariants
$\left\{c^{t)}_{ij}:0\leq t<\infty\right\}$ must also be unbounded,
and hence the sequence of algebras $B^{(0)},B^{(1)},\dots$ must
contain representatives of infinitely many Morita equivalence classes.

Since we have seen that the projective modules $R_j^{(t)}$ correspond
under an equivalence of categories to the terms in a minimal
projective resolution of the $E$-module $\Hom_A(Q_0,P_j)$, where
$E=\End_A(Q_0)$, and the modules $P_i$ for $i\in I_0$ correspond to
the indecomposable projective $E$-modules under the same equivalence
of categories, the following theorem follows.

\begin{thm}\label{main}
  Let $A$ be a symmetric $k$-algebra, let $\{S_i:i\in I\}$ be a set of
  representatives of the isomorphism classes of simple modules, and
  for each $i\in I$ let $P_i$ be a projective cover of $S_i$. Let
  $I_0$ be a subset of $I$, $Q_0=\bigoplus_{i\in I_0}P_i$ and
  $E=\End_A(Q_0)$. If there is $j\in I\setminus I_0$ such that
  $\Hom_A(Q_0,P_j)$, considered as an $E$-module, has a minimal
  projective resolution whose terms have unbounded dimension, then
  there are infinitely many Morita equivalence classes of algebras
  derived equivalent to $A$.
\end{thm}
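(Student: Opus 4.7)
The plan is to collect the pieces assembled just before the statement and finish with a short pigeonhole argument on the finite set $I_0$.

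The first step is to invoke the identity
$$c^{(t)}_{ij} + c^{(t+1)}_{ij} = \dim_k \Hom_A\bigl(P_i, R_j^{(t)}\bigr),$$
already derived for $i \in I_0$ and $j \in I \setminus I_0$. From this, unboundedness of the right-hand side in $t$ for a single $i \in I_0$ forces the Cartan invariants $c^{(t)}_{ij}$ to be unbounded, and unbounded Cartan invariants across the sequence $B^{(0)}, B^{(1)}, \dots$ force infinitely many Morita equivalence classes, since within any fixed Morita class the Cartan matrix of a basic representative is a fixed finite matrix of nonnegative integers.

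The second step is to transport everything through the equivalence $H = \Hom_A(Q_0,-)\colon \add(Q_0) \iso P_E$. Under $H$, the modules $\{P_i : i \in I_0\}$ become a complete set of indecomposable projective $E$-modules, while $H(R_j^{(t)})$ is the $(t{+}1)$-st term of the minimal projective resolution of $H(P_j) = \Hom_A(Q_0, P_j)$ over $E$. Fullness of $H$ on $\add(Q_0)$ yields
$$\dim_k \Hom_A\bigl(P_i, R_j^{(t)}\bigr) = \dim_k \Hom_E\bigl(H(P_i), H(R_j^{(t)})\bigr).$$

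The remaining --- and genuinely required --- step is to extract a single fixed $i \in I_0$ for which these numbers grow unboundedly. Writing $H(R_j^{(t)}) = \bigoplus_{k \in I_0} H(P_k)^{n_k^{(t)}}$, the hypothesis says $\sum_{k \in I_0} n_k^{(t)} \dim_k H(P_k)$ is unbounded in $t$. Since $I_0$ is finite and the factors $\dim_k H(P_k)$ are fixed positive integers, pigeonhole yields some $i \in I_0$ with $n_i^{(t)}$ unbounded along a subsequence of $t$. For that $i$, $\dim_k \Hom_E(H(P_i), H(R_j^{(t)})) \geq n_i^{(t)}$ via the componentwise identity maps into the $n_i^{(t)}$ copies of $H(P_i)$, so this dimension is unbounded and the argument closes. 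This pigeonhole reduction is the main obstacle; everything else is a direct application of work already carried out.
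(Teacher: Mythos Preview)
Your proposal is correct and follows the paper's approach exactly: the identity $c^{(t)}_{ij}+c^{(t+1)}_{ij}=\dim_k\Hom_A(P_i,R_j^{(t)})$, the transport through the equivalence $H$, and the conclusion about Morita classes are precisely the ingredients the paper assembles before stating the theorem. The one point you spell out that the paper leaves tacit is the pigeonhole step---passing from ``the terms $H(R_j^{(t)})$ have unbounded dimension'' to ``$\dim_k\Hom_A(P_i,R_j^{(t)})$ is unbounded for some fixed $i\in I_0$''---which is immediate from $\dim_k H(R_j^{(t)})=\sum_{i\in I_0}\dim_k\Hom_A(P_i,R_j^{(t)})$ and the finiteness of $I_0$; calling it ``the main obstacle'' overstates its difficulty, but you are right that it is logically required.
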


\section{Applications}

We shall apply Theorem~\ref{main} to some blocks of group algebras,
but first let us explain why it doesn't apply in the small cases where
there are known to be only finitely many Morita equivalence classes of
algebras derived equivalent to a certain block.

In the case of blocks $A$ with cyclic defect group, then whatever
subset $I_0\subset I$ we choose, the endomorphism algebra
$\End_A\left(Q_0\right)$ is periodic, and so the minimal projective
resolution of any $E$-module has bounded terms, so our theorem doesn't
apply.

Similarly, let us take $A$ to be a block with defect group $C_2\times
C_2$: let us take $A$ to be the group algebra of the alternating group
$A_4$, although similar remarks apply to any block with the same defect
group. There are three simple modules, so we have the choice of taking
$|I_0|=1$ or $|I_0|=2$ (it is easy to see that if $I_0=\emptyset$ or
$I_0=I$, the algebras $B^{(t)}$ that we construct are all Morita
equivalent to $A$). If $|I_0|=1$, then $E$ is isomorphic to the
algebra $k[x]/(x^2)$, which is periodic. If $|I_0|=2$, then $E$ is a
Brauer tree algebra for the tree with two edges, and so again is
periodic. So in neither case are there any $E$-modules with unbounded
minimal projective resolution.

Another obvious case where our theorem certainly can't be applied is
to a block with only one simple module (so the only possibilities are
$I_0=\emptyset$ or $I_0=I$. Again, in this case, there are not
infinitely many Morita classes of algebras derived equivalent to the
block, since for a local algebra, the only tilting complexes are
isomorphic to shifts of projective generators, and so all derived
equivalent algebras are in fact Morita equivalent.

It may well be that the cases we have just described are the only
examples of blocks with abelian defect group which are derived
equivalent to only finitely many Morita equivalence classes of blocks,
although there are some more small cases where Theorem~\ref{main} does
not prove this.

Now let us examine some cases where the theorem does apply.

In characteristic 3, let $D=C_3\times C_3$ be an elementary abelian
group of rank 2.

First let $G$ be the semidirect product $D\rtimes C_2$, where a
generator of $C_2$ acts on $D$ by inverting all elements, and take
$A=kG$.

Then $A$ has two simple modules, obtained by inflating the two
one-dimensional modules for $kC_2$. Denote these by $S_k$ and
$S_{\epsilon}$. We'll take $I_0=\{k\}$, so that $Q_0=P_k$.

The projective modules $P_k$ and $P_{\epsilon}$ have Loewy series that are respectively
$$
\begin{array}{ccccc}
&&S_k&&\\
&S_{\epsilon}&&S_{\epsilon}&\\
S_k&&S_k&&S_k\\
&S_{\epsilon}&&S_{\epsilon}&\\
&&S_k&&
\end{array}
$$
and
$$
\begin{array}{ccccc}
&&S_{\epsilon}&&\\
&S_k&&S_k&\\
S_{\epsilon}&&S_{\epsilon}&&S_{\epsilon}\\
&S_k&&S_k&\\
&&S_{\epsilon}&&
\end{array}
$$

The endomorphism algebra $E$ of $P_k$ is a 5-dimensional commutative
local algebra, with Loewy length 3, generated by three elements $x$,$y$ and $z$ subject to
the relations $x^2=xy=yz=z^2=0$ and $xz=y^2$. The set
$\{1,x,y,z,y^2\}$ is a basis. 

The $M=\Hom_A\left(P_k,P_{\epsilon}\right)$ is a 4-dimensional
indecomposable $E$-module, with Loewy length 2, generated by two
elements $u$ and $v$, with $ux=uy^2=vy^2=vz=0$. A basis is given by
$\{u,v,uz,vx\}$. $M$ has two-dimensional head and two-dimensional
socle.

Since $E$ is a symmetric algebra, the syzygies $\Omega^sM$ will all be
indecomposable with no projective summands, and so have Loewy length
at most two, and the socle of $\Omega^{s+1}M$ will be isomorphic to
the head of $\Omega^sM$ for every $s$. Denoting the dimension of the socle of $\Omega^sM$ by $a_s$ (so the dimension of its head is $a_{s+1}$), the projective cover of $\Omega^sM$ will be the direct sum of $a_{s+1}$ copies of the regular $E$-module. Since the dimension of $E$ is 5, the short exact sequence
$$0\to\Omega^{s+1}M\to E^{a_{s+1}}\to\Omega^sM\to 0$$
gives a recurrence relation
$$a_{s+1}=3a_s+1.$$
We have $a_0=2=a_1$, and solving the recurrence relation we have
$$a_s=\left(1-\frac{1}{\sqrt{5}}\right)\left(\frac{3+\sqrt{5}}{2}\right)^s +
\left(1+\frac{1}{\sqrt{5}}\right)\left(\frac{3-\sqrt{5}}{2}\right)^s,$$
and in particular $a_s$ grows exponentially, so Theorem~\ref{main}
tells us that there are infinitely many Morita equivalence classes of
algebras derived equivalent to $kG$.

However, if we take a different semidirect product $G=D\rtimes C_2$,
where now a generator of $C_2$ acts trivially on one cyclic factor of
$D=C_3\times C_3$, but by inversion on the other, then the
indecomposable projective $kG$-modules have Loewy series
$$
\begin{array}{ccccc}
&&S_k&&\\
&S_k&&S_{\epsilon}&\\
S_k&&S_{\epsilon}&&S_k\\
&S_{\epsilon}&&S_k&\\
&&S_{\epsilon}&&
\end{array}
$$
and
$$
\begin{array}{ccccc}
&&S_{\epsilon}&&\\
&S_{\epsilon}&&S_k&\\
S_{\epsilon}&&S_k&&S_{\epsilon}\\
&S_k&&S_{\epsilon}&\\
&&S_{\epsilon}&&
\end{array}
$$

This time the endomorphism algebra $E=\End_{kG}\left(P_k\right)$ is a
six-dimensional commutative algebra, and
$M=\Hom_{kG}\left(P_k,P_{\epsilon}\right)$ is a three-dimensional
$E$-module with $\Omega M\cong M$, so the minimal projective
resolution of $M$ is periodic, and our main theorem doe not apply.

Of course, this doesn't prove that there are only finitely many Morita
equivalence classes of algebras that are derived equivalent to $kG$,
and we don't know whether or not this is in fact the case.

\section{Questions and concluding remarks}

Our construction gives examples of families of infinitely many derived
equivalent algebras with unbounded Cartan invariants. But there are
well-known conjectures in modular representation theory (such as the
weaker form of Donovan's Conjecture, or some more precise conjectures)
which predict a bound on the Cartan invariants of a block in terms of
the defect group.

So an obvious question is whether, if such a bound were proved, we
could then deduce Donovan's Conjecture (at least for blocks with
abelian defect group) from Brou\'e's Abelian Defect Group
Conjecture. In other words:

\begin{question}
  Are there infinite families of Morita equivalence classes of
  algebras, all derived equivalent to the same block algebra, but with
  bounded Cartan invariants.
\end{question}

It is known that there is only a countable set of algebras derived
equivalent to a given algebra, so there can not be continuous families
of derived equivalent algebras, at least over an uncountable field,
but it does not seem obvious that there cannot be an infinite discrete
set of derived equivalent, but not Morita equivalent, algebras with
the same Cartan matrix. We don't know any examples of this kind.

Another question that arises is that the construction proving our main
theorem provides examples of infinite families of algebras derived
equivalent to a given block, but if even the weaker form of Donovan's
Conjecture is true, only finitely many of them can be Morita
equivalent to block algebras. However, they have most of the algebraic
properties that they would have to have to be block algebras, by
virtue of being derived equivalent to one, since many obvious
properties are preserved by derived equivalence. 

Some are ruled out by partial results in the direction of Donovan's
Conjecture, but to the best of our knowledge, all these results rely
on the Classification of Finite Simple Groups, or at least, most of
the force of that theorem..

\begin{question}
  Is there any way to prove that any of the algebras we construct are
  {\bf not} Morita equivalent to any block algebras, short of using
  the Classification of Finite Simple Groups or something close to that?
\end{question}

\end{document}